\DeclareMathOperator{\Fin}{\mathit{Fin}}
\newcommand{\mon}[2]{\mathrm{mon}_{\mathscr{#1}}\left(#2\right)}
\newcommand{\Fun}[2]{\mathit{Fun}\left({#1},{#2}\right)}
\newcommand{\F}[2]{\mathscr{F}\left({#1},{#2}\right)}
\newcommand{\FC}[2]{\mathscr{F}_C\left({#1},{#2}\right)}
\newcommand*{\FN}{\mathit{FN}}
\newcommand*{\FQ}{\mathit{FQ}}
\newcommand*{\BQ}{\mathit{BQ}}
\newcommand*{\id}{\mathit{id}}
\let\uhr\upharpoonright
\renewcommand*{\upharpoonright}{\hspace{-.07cm}\uhr\hspace{-.07cm}}
\newtheorem{thm}{Theorem}
\newtheorem{prop}[thm]{Proposition}
\theoremstyle{remark}
\newtheorem{ex}{Example}
\theoremstyle{definition}
\newtheoremstyle{axiom1}
{3pt}
{3pt}
{\itshape}
{}
{\bfseries\itshape}
{}
{.5em}
{}
\newtheoremstyle{axiom2}
{3pt}
{3pt}
{\itshape}
{}
{\bfseries\itshape}
{\\}
{.5em}
{}
\theoremstyle{axiom1}
\title{Real Functions and its Differentiation\\ in Alternative Set Theory}
\author{Kiri Sakahara\thanks{Yokohama National University and Kanagawa University, Kanagawa, Japan.} \and Takashi Sato\thanks{Toyo University, Tokyo, Japan.}}
\date{}
\begin{document}

\maketitle
\begin{abstract}
In the previous paper (Kiri Sakahara and Takashi Sato. Basic Topological Concepts and a Construction of Real Numbers in Alternative Set Theory. arXiv e-prints, arXiv:2005.04388, May 2020), the authors displayed basic topological concepts and a construction of a system of real number in alternative set theory (AST).
The present paper is a continuation of that research providing additional treatments of real functions.
The basic properties of differentiation in AST are preserved as in the conventional calculus.
\end{abstract}

\section{Introduction}
The authors displayed basic topological concepts and a construction of a system of real number in alternative set theory (AST, for short) in Sakahara and Sato \cite{topology}.
The present paper is a continuation of that research providing additional treatments of real functions.

Almost all concepts, formulations and statements displayed here are due to Tsujishita \cite{tjst}.
The only exception is Theorem \ref{6}, which asserts that derivatives of real functions given here coincide with that defined in the traditional manner.
Proofs of the statements are also virtually identical.
However, the paper does not skip many of them since there remain essential differences with regard to the way sets are formulated, so too the way proofs proceed.

The readers can find relatively compact explanation of AST, such as an axiomatic system of AST, in Vop\v{e}nka and Trlifajov\'{a} \cite{encycro-ast} and Sakahara and Sato \cite{cogjump,topology}.

\section{A system of numbers}

Let us start with constructing a number system in accordance with Vop\v{e}nka \cite{ast}.
The class of \textit{natural numbers} $N$ is defined as:
\[
N\ =\ \left\{x\ ;\
\begin{matrix}
\left(\forall y\in x\right) \left(y\subseteq x\right)\\
\wedge\left(\forall y,z\in x \right) \left(y\in z \vee y=z \vee z\in y\right)
\end{matrix}
\right\},
\]
while the class of \textit{finite natural numbers} $\FN$ consists of the numbers represented by finite sets
\[
\FN \ =\ \left\{x\in N\ ;\
\Fin(x)\right\}
\]
in which $\Fin(x)$ means that each subclass of $x$ is a set.
The class of all integers $Z$ and that of all rational numbers are defined respectively as:
\[
Z\ =\ N\cup \left\{ -a\ ;\ a\in N \wedge a\ne 0\right\}
\qquad\text{and}\qquad
Q\ =\ \left\{
\frac{x}{y}\ ;\ x,\,y\in Z \wedge y\ne 0
\right\}.
\]
$\BQ\subseteq Q$ denotes the class of \textit{bounded rational numbers} and $\FQ\subseteq BQ$ the class of \textit{finite rational numbers}, i.e.,
\begin{eqnarray*}
\BQ &=& \{q\in Q\ ;\ \left(\exists i\in\FN\right)
  \left(|q|\leq i\right)
\}\quad\text{ and }\\
\FQ &=& \left\{ q \in Q\ ;\ \left(\exists x,y\in\FN\right)
\left( q=\frac{x}{y}\vee q=-\frac{x}{y}\right) \right\}.
\end{eqnarray*}

Real numbers are defined in AST as an equivalence class of bounded rational numbers.
The reason behind this construction lies in the human's inability to distinguish two mutually close rational numbers.
This idea is grasped by the \textit{indiscernibility equivalence}, $\doteq$, on the class $Q$ of all rational numbers.
One of the definitions of the indiscernibility equivalence $\doteq$ is given as:
\[
p\doteq q\quad \equiv\quad
 \begin{pmatrix}
  \left( \exists k \right)
  \left( \forall i>0\right)
  \left(|p|<k \wedge |p-q|<\frac{1}{i}\right)\\[.18cm]
  \vee
  \left(\forall k\right)
  \left(
  \left( p>k \wedge q>k\right)  \vee
  \left(p<-k \wedge q<-k\right)
  \right)
 \end{pmatrix}
\]
in which the letters $i,j,k$ denote finite natural numbers, i.e., $i,j,k\in\FN$ for notational ease, hereafter.
For each $q\in Q$ the notation $\mon{}{q}=\{s\in Q\, ;\, s\doteq q\}$, called the \textit{monad} of $q$, represents the class of all rational numbers which are indiscernible from $q$.
A real number is denoted as a monad $\mon{}{q}$ of some rational number $q$.
Two limiting cases are denoted as:
\[
\infty\ =\
\{q\in Q\ ;\
 \left(\forall i\right)
 \left(q>i\right)
\}\quad\text{ and }\quad
-\infty\ =\
\{q\in Q\ ;\
 \left(\forall i\right)
 \left(q<-i\right)
\}.
\]
The class of all real numbers $R$ is defined as:
\[
R\ \equiv\ \left\{\mon{}{x}\ ;\
x\in \BQ
\right\}\ =\ \BQ/\doteq.
\]
Let us denote a \textit{real continuum} as $\mathscr{R}=\langle Q,\doteq\rangle$, where a \textit{continuum}\footnote{This concept is due to Tsujishita \cite{tjst}.} is a pair of classes $\mathscr{C}=\left\langle C,\doteq_C\right\rangle$, in which a set-theoretically definable class $C$ is called as a support of $\mathscr{C}$.

Finite arithmetic operation of real numbers is same as usual.
The countable sum of nonnegative real number $r_i$ is defined by the following.
Let $b_i\in r_i$.
Since the numbers $r_i$ are nonnegative, prolonging the sequence $(b_i)_{i\in\FN}$ onto a set $(b_i)_{i\in \alpha}$, there exists an infinite natural number $\delta\leq \alpha,\, \delta\notin\FN$ which satisfies for any infinite natural number $\gamma\leq \delta$, $\gamma\notin\FN$ the following indiscernibility equivalence
\[
  \sum_{n=0}^\gamma b_n\ \doteq\ \sum_{n=0}^\delta b_n.
\]
We put $\sum_{i\in\FN} r_i=r$, where $r\in {R}\cup\{\pm\infty\}$ satisfies $\sum_{n=0}^\delta b_n\in r$.

\section{Morphisms}

Let $\mathscr{C}_i\ (i=1,2)$ be continua, then a function $F:C_1\rightarrow C_2$ is \textit{continuous} if $x\doteq y$ implies $F(x)\doteq F(y)$.
Two continuous functions $F$ and $G$ are \textit{indiscernible}, denoted simply as $F\doteq G$, if for all $x\in C_1$, $F(x)\doteq G(x)$ follows\footnote{
  To be precise,  put
  \[
  r_i\ \equiv\
  \left\{\langle f,g\rangle\ ;\
  \left(f,g\in \Fun{c_1}{c_2}\right)
  \wedge\left(\forall x\in c_1\right)\left(f(x)-g(x)\leq\frac{1}{2^i}\right)
  \right\},
  \]
  in which $C_j\subseteq c_j$ for $j=1,2$ when $C_1$ is a semiset, otherwise $H(C_j)\subseteq c_j$ for a given similarity endomorphism $H:V\rightarrow D$ in which $D$ is a semiset (for the definition and its existence see p.111 of Vop\v{e}nka \cite{ast}), and $\doteq_{c_2^{c_1}}\ \equiv\ \bigcap_{i\in\FN} {r}_i$.

  For each pair of functions $F,G\in\Fun{C_1}{C_2}$,
  let us denote $F\doteq G$ iff there exists their (or their similar classes') prolonged sets of functions $f,g\in\Fun{c_1}{c_2}$ which satisfies $F=f\upharpoonright C_1$ (or $H(F)=f\upharpoonright H(C_1)$), $G=f\upharpoonright C_1$ (or $H(G)=g\upharpoonright H(C_2)$) and $f\doteq_{c_2^{c_1}} g$.
}.

The \textit{morphism} $\mathscr{F}$ between two continua is defined as follows.
\begin{enumerate}[(1)]
\item A \textit{morphism} from $\mathscr{C}_1$ to $\mathscr{C}_2$ is a monad $\mon{}{F}$ denoted simply as $[F]$ for some continuous function $F$ from $C_1$ to $C_2$.
\item If $\mathscr{C}_i$ ($i=1,2$) are continua, the notation  $\mathscr{F}:\mathscr{C}_1\rightarrow \mathscr{C}_2$ means that $\mathscr{F}$ is a morphism from $\mathscr{C}_1$ to $\mathscr{C}_2$ .
\item If $\mathscr{F}$ is a morphism, then the expression $G\in \mathscr{F}$ means $G\doteq F$ in which $\mathscr{F}=[F]$.
  If $G\in \mathscr{F}$, we say that the morphism $\mathscr{F}$ is \textit{represented by} $G$ and $G$ \textit{represents} $\mathscr{F}$.
\item If $\mathscr{F}$ and $\mathscr{G}$ are morphisms represented respectively by $F$ and $G$, then the expression $\mathscr{F}=\mathscr{G}$ means $F\doteq G$.
\end{enumerate}
It is essential that morphisms are defined as the monads of continuous function.
When a set-theoretically definable function $F:C_1\rightarrow C_2$ has an indiscernible gap at $x\in C_1$, that is, $\neg(F(x)\doteq F(y))$ for some $y\doteq x$, its value at $\mon{}{x}$ cannot be determined uniquely since $x\doteq_{C_1} y$ but $\neg(F(y)\doteq_{C_2} F(x))$, thus, $\mon{C_2}{F(x)}\cap\mon{C_2}{F(y)}=\emptyset$.

Contrary to the framework of Tsujishita, in which the morphisms are not guaranteed to be classes, they are in AST.

The identity morphism $\id_\mathscr{C}$ is represented by the identity function $\id_C$.
Given $\mathscr{F}_1:\mathscr{C}_1\rightarrow \mathscr{C}_2$ and $\mathscr{F}_2:\mathscr{C}_2\rightarrow\mathscr{C}_3$, the composition $\mathscr{F}_2\circ \mathscr{F}_1:\mathscr{C}_1\rightarrow\mathscr{C}_3$ is given as the morphism $[F_2\circ F_1]$ represented by the composition $F_2\circ F_1$ of $F_1:C_1\rightarrow C_2$ and $F_2:C_2\rightarrow C_3$.

A morphism $\mathscr{F}:\mathscr{C}_1\rightarrow\mathscr{C}_2$ is \textit{injective} and \textit{surjective} if it is represented by a continuous function $F:C_1\rightarrow C_2$ satisfying respectively
\[
\left(\forall x,y\in C_1\right)
\left(F(x)\doteq F(y) \text{ implies } x\doteq y\right).
\]
and
\[
\left(\forall x_2\in C_2\right)
\left(\exists x_1\in C_1\right)
\left(F(x_1)\doteq x_2\right)
\]

A morphism $\mathscr{F}:\mathscr{C}_1\rightarrow \mathscr{C}_2$ is an \textit{equivalence} if there is a uniquely determined morphism $\mathscr{F}^{-1}$, which is the \textit{inverse} of $\mathscr{F}$, satisfying \[
\mathscr{F}^{-1}\circ \mathscr{F}=\id_{\mathscr{C}_1}\ \text{ and }\ \mathscr{F}\circ \mathscr{F}^{-1}=\id_{\mathscr{C}_2}.
\]
Every pair of representations of $\mathscr{F}$ and $\mathscr{F}^{-1}$, say $F:C_1\rightarrow C_2$ and $F^{-1}:C_2\rightarrow C_1$, satisfies
\[
F^{-1}\circ F\ \doteq \ \id_{C_1}\quad\text{and}\quad
F\circ F^{-1}\ \doteq \ \id_{C_2}.
\]
$F^{-1}$ is said to be \textit{almost inverse} of $F$.

If there is an equivalence $\mathscr{F}:\mathscr{C}_1\rightarrow\mathscr{C}_2$, the continuum $\mathscr{C}_1$ is said to be \textit{equivalent} to $\mathscr{C}_2$ denoted as $\mathscr{C}\simeq\mathscr{C}_2$.

Let $\mathscr{C}$ be a continuum and $\mathscr{C}_i\subset \mathscr{C}$ ($i=1,2$) be subcontinua.
An equivalence $\mathscr{A}:\mathscr{C}_1\rightarrow\mathscr{C}_2$ is said to be a \textit{quasi-identity} if its representation satisfies $\alpha(x)=x$ for all $x\in C_1$.
A quasi-identity, by definition, is uniquely determined if it exists.

\begin{prop}[Proposition 2.4.2 of Tsujishita \cite{tjst}]\label{242}
  Let $r$ be a nonzero nonfinite rational number, in which there exists $\tau\in N\setminus\FN$ and $r=\frac{1}{\tau}$.
  The inclusion
  \[
  \iota_r:r Z\rightarrow Q
  \]
  represents a quasi-identity, for which the function $\kappa_r:Q\rightarrow rZ$ defined by
  \[
  \kappa_r(s)\ \equiv\ \left[\frac{s}{r}\right]r
  \]
  in which $[x]$ denotes the integer part of $x$, gives an almost inverse of $\iota_r$.
\end{prop}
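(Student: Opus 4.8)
The plan is to show that the morphism $[\iota_r]$ is an equivalence whose inverse is $[\kappa_r]$; since $\iota_r(x)=x$ for every $x\in rZ$, this will immediately exhibit $[\iota_r]$ as a quasi-identity. First I would dispose of the two trivial points: $\iota_r$ is continuous because it is the inclusion (so $x\doteq y$ gives $\iota_r(x)=x\doteq y=\iota_r(y)$), and $\kappa_r$ really is a function $Q\to rZ$, since $[s/r]\in Z$ forces $[s/r]\,r\in rZ$. The entire argument then hinges on a single estimate, established next.

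The key step is to prove that $\kappa_r(s)\doteq s$ for every $s\in Q$. Writing $\kappa_r(s)=[s/r]\,r$ and combining $[s/r]\le s/r<[s/r]+1$ with $r>0$, I obtain $0\le s-\kappa_r(s)<r=\frac{1}{\tau}$. Because $\tau\in N\setminus\FN$, the quantity $\frac{1}{\tau}$ is smaller than $\frac{1}{i}$ for every $i\in\FN$, so the gap $s-\kappa_r(s)$ is infinitesimal. To convert this into $s\doteq\kappa_r(s)$ I would split according to the two disjuncts in the definition of $\doteq$: if $s$ is bounded, say $|s|<k$ for some $k\in\FN$, the first disjunct applies since $|s-\kappa_r(s)|<\frac{1}{\tau}<\frac{1}{i}$ for all $i$; if $s$ is unbounded, then $\kappa_r(s)$ differs from $s$ by less than $1$ and hence exceeds every $k\in\FN$ (resp.\ falls below every $-k$) exactly when $s$ does, so the second disjunct applies. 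This case analysis---especially the unbounded case, where the bounded disjunct is unavailable and one must instead check that $\kappa_r(s)$ lies on the same infinite side as $s$---is the part I expect to require the most care.

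With the estimate in hand the remainder is formal. Continuity of $\kappa_r$ follows from it together with transitivity of $\doteq$: if $s\doteq t$ then $\kappa_r(s)\doteq s\doteq t\doteq\kappa_r(t)$, so $[\kappa_r]$ is a genuine morphism. For the two composites, a direct computation gives $\kappa_r(\iota_r(rz))=[z]\,r=rz$, whence $\kappa_r\circ\iota_r=\id_{rZ}$ exactly, while $\iota_r\circ\kappa_r\doteq\id_Q$ is precisely the estimate $\kappa_r(s)\doteq s$ read pointwise. Passing to monads yields the morphism identities $[\kappa_r]\circ[\iota_r]=\id_{rZ}$ and $[\iota_r]\circ[\kappa_r]=\id_Q$, so $[\iota_r]$ is an equivalence with $\kappa_r$ an almost inverse; uniqueness of the inverse morphism is the usual one-line consequence of associativity and the identity laws. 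Finally, $\iota_r(x)=x$ certifies that $[\iota_r]$ is a quasi-identity, completing the proof.
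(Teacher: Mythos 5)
Your proposal is correct and follows essentially the same route as the paper's proof: the exact identity $\kappa_r(\iota_r(rz))=rz$, the integer-part estimate showing $0\le s-\kappa_r(s)<r\doteq 0$, and hence $\iota_r\circ\kappa_r\doteq\id_Q$. You simply spell out details the paper leaves implicit (continuity of $\iota_r$ and $\kappa_r$, and the case split between the bounded and unbounded disjuncts in the definition of $\doteq$), which is a sound elaboration rather than a different argument.
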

\begin{proof}
  It is obvious that $\kappa(\iota(nr))=nr$ for $n\in N$.
  Since $x<[x+1]\leq x+1$, the following inequations follow
  \[
  s\ =\ \left(\frac{s}{r}\right)r\ <\ \left[\frac{s}{r}+1\right] r\ \leq\ \left(\frac{s}{r}+1\right)r\ =\ s+r,
  \]
  thus, $\left[\frac{s}{r}\right]r\doteq s$ follows since $r\doteq 0$.
  Finally, $\iota\circ\kappa\doteq id$, follows.
\end{proof}

\section{Continua of functions}
Let $\mathscr{C}_i$ $(i=1,2)$ be continua in which $C_1$ is set-theoretically definable\footnote{
A class $\{ x\, ;\, \varphi(x)\}$ is said to be \textit{set-theoretically definable} if $\varphi(x)$ is a set formula.
}.
The \textit{continuum of functions} are given as
\[
\F{\mathscr{C}_1}{\mathscr{C}_2}\ =\ \left\langle\Fun{C_1}{C_2},\,\doteq\right\rangle.\footnotemark
\]
\footnotetext[4]{
Since $C_1$ is not always a set, $\Fun{-}{-}$, which is a class of all functions from left$-$ to right$-$, remains a codable class.
In regard to codable class, see 1.5 of Vop\v{e}nka \cite{ast}.
}
Subcontinua of $\F{\mathscr{C}_1}{\mathscr{C}_2}$ consisting of continuous functions are \textit{continua of morphisms from} $\mathscr{C}_1$ to $\mathscr{C}_2$, denoted as $\FC{\mathscr{C}_1}{\mathscr{C}_2}$.

While $\F{\mathscr{C}_1}{\mathscr{C}_2}$ is not necessarily equivalent to $\F{\mathscr{C}'_1}{\mathscr{C}'_2}$ even if $\mathscr{C}_1\simeq \mathscr{C}'_1$, continua of morphism $\FC{\mathscr{C}_1}{\mathscr{C}_2}$ always do as it is shown in the next proposition.

\begin{prop}[Proposition 5.1.1 of Tsujishita \cite{tjst}]\label{511}
  Let $\mathscr{C}_i$ and $\mathscr{C}'_i$ $(i=1,2)$ be continua and $G_i:C_i\rightarrow C'_i$ be representations of equivalences with almost inverse $G^{-1}_i:C'_i\rightarrow C_i$.
  Define functions from $\Fun{C_1}{C_2}$ to $\Fun{C'_1}{C'_2}$ as
  \[
  \alpha(F)\ \equiv\ G_2\circ F\circ G^{-1}_1,\hspace{\fill}
  \vcenter{
    \xymatrix{
      C_1
      \ar[r]^{F}
      \ar[d]^{\simeq}_{G_1}
      &
      C_2
      \ar[d]_{\simeq}^{G_2}
      \\
      C'_1
      \ar[r]_{\alpha(F)}
      &
      C'_2
    }
  }\hspace{1.44cm}
  \]
  and from $\Fun{C'_1}{C'_2}$ to $\Fun{C_1}{C_2}$ as
  \[
  \beta(F')\ \equiv\ G^{-1}_2\circ F'\circ G_1. \hspace{\fill}
  \vcenter{
    \xymatrix{
      C_1
      \ar[r]^{\beta(F')}
      \ar[d]^{\simeq}_{G_1}
      &
      C_2
      \ar[d]_{\simeq}^{G_2}
      \\
      C'_1
      \ar[r]_{F'}
      &
      C'_2
    }
  }\hspace{1.44cm}
  \]
  Then, $\alpha$ represents an equivalence $\mathscr{A}:\mathscr{F}_C(\mathscr{C}_1,\mathscr{C}_2)\rightarrow\mathscr{F}_C(\mathscr{C}'_1,\mathscr{C}'_2)$ with an almost inverse $\mathscr{B}$, one of whose representations is $\beta$.
\end{prop}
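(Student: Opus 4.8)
The plan is to verify three things in turn: that $\alpha$ and $\beta$ carry continuous functions to continuous functions (so that they act between the supports $\Fun{C_1}{C_2}$ and $\Fun{C'_1}{C'_2}$ of the two continua of morphisms), that each respects the indiscernibility $\doteq$ (so that $\mathscr{A}=[\alpha]$ and $\mathscr{B}=[\beta]$ are genuine morphisms of the function continua), and finally that $\beta\circ\alpha\doteq\id$ and $\alpha\circ\beta\doteq\id$, which by the definition of equivalence makes $\mathscr{A}$ an equivalence with inverse $\mathscr{B}$.

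For the first point I would note that $G_1^{-1}$, $G_2$ and $G_2^{-1}$, being representations of equivalences, are continuous, and that a composite of continuous functions is continuous; hence $\alpha(F)=G_2\circ F\circ G_1^{-1}$ is continuous whenever $F$ is, and symmetrically for $\beta$. For the second point I would argue pointwise: if $F\doteq H$ in $\FC{\mathscr{C}_1}{\mathscr{C}_2}$, then for every $y\in C'_1$ one has $F(G_1^{-1}(y))\doteq H(G_1^{-1}(y))$, and applying the continuous $G_2$ gives $\alpha(F)(y)\doteq\alpha(H)(y)$; thus $\alpha(F)\doteq\alpha(H)$, and the same computation with $G_2^{-1}$ handles $\beta$.

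The substance is the third point. Expanding the composite gives
\[
\beta(\alpha(F))\ =\ G_2^{-1}\circ G_2\circ F\circ G_1^{-1}\circ G_1,
\]
and I would evaluate at an arbitrary $x\in C_1$ and chain indiscernibilities outward from the centre. Since $G_1^{-1}\circ G_1\doteq\id_{C_1}$ we have $G_1^{-1}(G_1(x))\doteq x$; continuity of $F$ then yields $F(G_1^{-1}(G_1(x)))\doteq F(x)$; continuity of $G_2$ propagates this to $G_2(F(G_1^{-1}(G_1(x))))\doteq G_2(F(x))$; continuity of $G_2^{-1}$ propagates it once more; and finally $G_2^{-1}\circ G_2\doteq\id_{C_2}$ gives $G_2^{-1}(G_2(F(x)))\doteq F(x)$. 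Transitivity of $\doteq$ then delivers $\beta(\alpha(F))(x)\doteq F(x)$ for every $x$, i.e.\ $\beta\circ\alpha\doteq\id$; the argument for $\alpha\circ\beta\doteq\id$ is entirely symmetric. Uniqueness of the inverse is then automatic, since any two inverses $\mathscr{B},\mathscr{B}'$ satisfy $\mathscr{B}=\mathscr{B}\circ\mathscr{A}\circ\mathscr{B}'=\mathscr{B}'$.

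I expect the main obstacle to be precisely that the almost-inverse relations give only $G_i^{-1}\circ G_i\doteq\id$ rather than equality, so the factors $G_2^{-1}\circ G_2$ and $G_1^{-1}\circ G_1$ cannot simply be cancelled algebraically; one must transport the indiscernibility through the surrounding functions. This is where continuity is indispensable and asymmetric: continuity of $F$ is what lets the inner discrepancy $G_1^{-1}(G_1(x))\doteq x$ survive the application of $F$, while continuity of the $G$'s is what lets it survive the outer applications. A secondary point to keep in view is that the chained indiscernibilities must be read against the function-space relation $\doteq$ described in the footnote; I would confirm that the pointwise reading used above is consistent with that prolonged definition, so that the successive appeals to transitivity are legitimate.
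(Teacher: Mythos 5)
Your proposal is correct and follows essentially the same route as the paper: the same pointwise argument that $\alpha$ and $\beta$ respect $\doteq$ (hence are continuous), followed by the same almost-inverse computation $\beta(\alpha(F))(x)=G_2^{-1}(G_2(F(G_1^{-1}(G_1(x)))))\doteq F(x)$ using continuity of $F$ and the relations $G_i^{-1}\circ G_i\doteq\id$. The only differences are cosmetic: you chain the inner indiscernibility outward through $G_2$ and $G_2^{-1}$ before invoking the almost-inverse relation, whereas the paper invokes it at the perturbed point directly, and you add explicit (harmless) remarks on preservation of continuity and uniqueness of the inverse that the paper leaves implicit.
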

\begin{proof}
  Let $F,H\in\mathscr{F}_C(\mathscr{C}_1,\mathscr{C}_2)$ which satisfies $F\doteq H$ and $x\in C'_1$.
  Then $F(G^{-1}_1(x)) \doteq H(G^{-1}_1(x))$ holds.
  Consequently the next equations hold
  \[
  \alpha(F)(x)\ =\ G_2(F(G_1^{-1}(x)))\ \doteq\ G_2(H(G_1^{-1}(x)))\ =\ \alpha(H)(x)
  \]
  and, thus, $\alpha$ is continuous.
  Continuity of $\beta$ is verified in a similar way.

  Next, we show that $\beta$ is an almost inverse of $\alpha$.
  Since $G^{-1}_i$ are almost inverse of $G_i$, $G^{-1}_i(G_i(x))\doteq x$ holds for every $x\in C_i$ and $i=1,2$.
  Then $F(G^{-1}_i(G_i(x)))\doteq F(x)$ follows by the continuity of $F$.
  Consequently,
  \[
  \beta(\alpha(F))(x)\ =\ G_2^{-1}(G_2(F(G_1^{-1}(G_1(x)))))\ \doteq\ F(x)
  \]
  and the equation below holds
  \[
  \beta(\alpha(F))\ \doteq\ F.
  \]
  Similarly, $\alpha(\beta(F'))\doteq F'$ follows.
\end{proof}

\section{Real functions}
Let $\mathscr{C}$ be a continuum.
A \textit{real function on} $\mathscr{C}$ is a morphism from $\mathscr{C}$ to $\mathscr{R}$.

\begin{ex}[Polynomial functions]
  The polynomial function $\mathscr{F}:\mathscr{R}\rightarrow\mathscr{R}$ is given simply as
  \[
  \mathscr{F}(x)\ \equiv\ \sum_{i=0}^{n}p_i\cdot x^i.
  \]
  in which $p_i\in R$.
\end{ex}

\begin{ex}[Exponential]
  For $\tau\in N\setminus\FN$ and $q\in Q$, define the approximation of the $\tau$th power of the Napier's constant by
  \[
  \exp(q,\tau)\ \equiv\ \sum_{i=0}^{\tau}\frac{q^i}{i!}.
  \]
  Uniqueness of the value of the exponential is verified by Proposition 7.2.1 of Tsujishita \cite{tjst}.
  Then the \textit{exponential function} is given as
  \[
  e^x\ =\ \exp(x)\ \equiv\ \mon{}{\exp(q,\tau)}\quad\text{where } q\in x.
  \]
  in which the choices of $\tau\in N\setminus\FN$ and $q\in x$ are arbitrary.
  Uniqueness of its value of the function also verified by Proposition 7.2.5 and fundamental properties are verified by Propositions 7.2.6 and 7.2.10 of Tsujishita \cite{tjst}.

  It is worth mentioning that one of the usual ways to define exponential function is given as:
  \[
  \exp(x)\ \equiv\ \sum_{t=0}^{\infty}\frac{x^t}{t!}.
  \]
  As it is defined in the last paragraph of Section 2, countable sum of a given sequence of real numbers $\left(\frac{x^t}{t!}\right)_{t\in\FN}$ is given by that of prolonged sequence of their representative rational numbers, say, $\left(\frac{q^t}{t!}\right)_{t\in\tau}$ in which $q\in x$.
\end{ex}

\begin{ex}[Logarithm]
  For $\tau\in N\setminus\FN$ and $q\in\{x\in Q\, ;\, x>0\}$, define the approximation of the natural logarithm of $q$ by
  \[
    \log(q,\tau)\ \equiv\ \frac{1}{\tau}\max\left\{k\in Z\ ;\ \left(\exp\left(\frac{k}{\tau},\tau\right)\leq q\right)\wedge \left(|k|\leq \tau^2\right)\right\}.
  \]
  Then the \textit{natural logarithm function} can be given as
  \[
  \log(x)\ \equiv\ \mon{}{\log(q,\tau)}\quad \text{ where }q\in x,\text{ for } x\in(0,\infty).
  \]
  in which the choices of $\tau\in N\setminus\FN$ and $q\in x$ are arbitrary.
  Uniqueness of its value and fundamental properties are verified in Proposition 7.2.12 and 7.2.13 of Tsujishita \cite{tjst}.

  As it is seen in the last example, the definition of the function coincides with one of the usual ways to define it.
  Not only these two cases, every traditional real function can be dealt exactly the same manner.
\end{ex}

In the remainder of the section, let us investigate behaviors of real functions divided by indiscernible intervals to probe into features regarding derivatives of real functions, which we take up in the next section.

\begin{prop}[Proposition 7.5.1 of Tsujishita \cite{tjst}]\label{751}
  Let $F,\, G$ be rational-valued functions on $C$ and $a,b\in C$ which are mutually indiscernible, that is, $F\doteq G$ and $a\doteq_C b$, then the following two conditions are equivalent: for any given finite $n\in\FN$
  \begin{equation}\label{29}
    \left(
    \left(
    \left(\exists \varepsilon_a\in\mon{}{0}\right)
    \left(d(x,a)>|\varepsilon_a|\right)\wedge \left(x\doteq_C a\right)
    \right)\Rightarrow
    \left(\frac{|F(x)|}{d(x,a)^n}\doteq 0\right)\right)
  \end{equation}
  and
  \begin{equation}\label{30}
  \left(
  \left(
  \left(\exists \varepsilon_b\in\mon{}{0}\right)
  \left(d(x,b)>|\varepsilon_b|\right)\wedge \left(x\doteq_C b\right)
  \right)\Rightarrow
  \left(\frac{|G(x)|}{d(x,b)^n}\doteq 0\right)\right).
  \end{equation}
\end{prop}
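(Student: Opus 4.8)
The plan is to exploit the symmetry of the two conditions. Both hypotheses, $F\doteq G$ and $a\doteq_C b$, are symmetric in the pairs $(F,a)$ and $(G,b)$, so it suffices to prove the single implication \eqref{29}$\Rightarrow$\eqref{30} and then obtain the converse by interchanging the roles of the two pairs. Accordingly I would assume \eqref{29} and take an arbitrary $x$ satisfying the antecedent of \eqref{30}, namely $x\doteq_C b$ together with $d(x,b)>|\varepsilon_b|$ for some $\varepsilon_b\in\mon{}{0}$, and aim to derive $\frac{|G(x)|}{d(x,b)^n}\doteq 0$.

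The argument would rest on three elementary facts. First, $a\doteq_C b$ forces the monads of $a$ and $b$ to coincide, so the clause $x\doteq_C b$ is equivalent to $x\doteq_C a$; in particular $x$ already lies in the deleted monad of $a$ and meets the antecedent of \eqref{29} as soon as $d(x,a)>0$. Second, the triangle inequality gives $|d(x,a)-d(x,b)|\leq d(a,b)$, and $a\doteq_C b$ makes the right-hand side infinitesimal. Third, $F\doteq G$ gives $F(x)\doteq G(x)$, whence $\bigl||F(x)|-|G(x)|\bigr|\leq|F(x)-G(x)|$ is infinitesimal, and the uniform form recorded in the footnote furnishes a single infinitesimal bound $\sup_x|F(x)-G(x)|$.

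With these in hand I would transfer the quotient estimate by writing $\frac{|G(x)|}{d(x,b)^n}$ as the product of $\frac{|F(x)|}{d(x,a)^n}$, which is $\doteq 0$ by \eqref{29}, with the two correction factors $\frac{|G(x)|}{|F(x)|}$ and $\bigl(\frac{d(x,a)}{d(x,b)}\bigr)^n$, and show that these corrections do not destroy the infinitesimality supplied by \eqref{29}. When $d(x,b)$ dominates $d(a,b)$, that is $\frac{d(a,b)}{d(x,b)}\doteq 0$, one has $\frac{d(x,a)}{d(x,b)}\doteq 1$, its $n$th power is still $\doteq 1$ because $n\in\FN$, and $d(x,a)>0$, so \eqref{29} applies directly; the residual term $\frac{|F(x)-G(x)|}{d(x,b)^n}$ is then controlled by comparing the uniform bound of the third fact against $d(x,b)^n$.

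The hard part — the step I expect to be the genuine obstacle — is the complementary regime, where $d(x,b)$ is dominated by $d(a,b)$. There the ratio $\frac{d(x,a)}{d(x,b)}$ need not be $\doteq 1$, dividing the infinitesimal gap $|F(x)-G(x)|$ by the much smaller infinitesimal $d(x,b)^n$ may fail to stay infinitesimal, and the quotient for $(F,a)$ carries no direct information about $x$ at this scale. I would treat this case by contradiction: assuming \eqref{30} fails through such an $x$, I would use the coincidence of the monads to relocate the offending point to a witness near $a$ of comparable scale and thereby contradict \eqref{29}. Making this relocation precise on a general continuum $\mathscr{C}$, where no translation structure is available a priori (though it is present in the intended application, the rational line with $d(x,y)=|x-y|$), is where the main work of the proof will concentrate.
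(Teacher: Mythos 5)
Your strategy breaks down exactly at the point you flag as the ``hard part'', and not because it is hard: under your reading of \eqref{30} the statement is false, so no relocation argument can complete it. You are treating the antecedent of \eqref{30} as something to be verified for \emph{every} $x$ whose distance from $b$ exceeds \emph{some} per-$x$ infinitesimal, i.e.\ for every $x\neq b$ in the monad of $b$. Take $C=Q$ with $d(x,y)=|x-y|$, $a=b$, $F\equiv 0$, and $G\equiv\delta$ for a fixed positive infinitesimal $\delta$. Then $F\doteq G$, and \eqref{29} holds identically; yet for every $x$ with $0<d(x,b)^n\leq\delta$ (e.g.\ $x=\delta$, for $n\geq 1$) one has $\frac{|G(x)|}{d(x,b)^n}\geq 1$, and no ``offending witness near $a$'' exists to contradict \eqref{29}, since \eqref{29} holds for all $x$. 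The equivalence survives only because the quantifier $\exists\varepsilon_b\in\mon{}{0}$ is at your disposal: \eqref{30} asserts the existence of a \emph{single} infinitesimal threshold, fixed before $x$, above which the quotient is infinitesimal. The paper's proof uses precisely this freedom to discard your hard regime wholesale: it puts $\varepsilon_b=\max\left\{|\varepsilon_a|+d(a,b),\,3\cdot d(a,b)\right\}\doteq 0$, so that every admissible $x$ (i.e.\ $d(x,b)>\varepsilon_b$) automatically satisfies $d(x,a)>|\varepsilon_a|$, making \eqref{29} applicable, and $d(a,b)<\frac{1}{2}d(x,a)$, whence $d(x,b)>\frac{1}{2}d(x,a)$ and $\frac{|G(x)|}{d(x,b)^n}\leq 2^n\,\frac{|G(x)|}{d(x,a)^n}$ with $2^n$ finite. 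After the threshold is chosen this way, your ``easy regime'' is the only regime, and no translation structure on $\mathscr{C}$ is ever needed.

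One point in your favour: the residual term $\frac{|F(x)-G(x)|}{d(x,b)^n}$ that worries you is a real issue, and the paper's own final step, $\frac{|G(x)|}{d(x,b)^n}\doteq 2^n\frac{|F(x)|}{d(x,a)^n}$, glosses over it by silently replacing $G$ with $F$ in the numerator while the denominator is infinitesimal; the counterexample above (where $a=b$, so the paper's $\varepsilon_b$ reduces to $|\varepsilon_a|$) shows that step can genuinely fail as written. (Incidentally, your multiplicative correction factor $\frac{|G(x)|}{|F(x)|}$ is undefined when $F(x)=0$; the additive bound $|G(x)|\leq|F(x)|+|F(x)-G(x)|$ is the safe form.) The repair, however, is again to enlarge the uniform threshold rather than to argue pointwise: by the footnote's definition of $F\doteq G$ there is a single infinitesimal $\delta_0$ with $|F(x)-G(x)|\leq\delta_0$ for all $x$; choose $\eta\in\mon{}{0}$, $\eta>0$, with $\frac{\delta_0}{\eta^n}\doteq 0$ (e.g.\ $\eta=2^{-\nu}$ with $\nu$ the largest natural number satisfying $2^{-2n\nu}\geq\delta_0$, which is nonfinite because $\delta_0$ is infinitesimal and $n\in\FN$), and put $\varepsilon_b=\max\left\{|\varepsilon_a|+d(a,b),\,3\cdot d(a,b),\,\eta\right\}$. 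Then for every admissible $x$ both $2^n\frac{|F(x)|}{d(x,a)^n}\doteq 0$ and $\frac{|F(x)-G(x)|}{d(x,b)^n}<\frac{\delta_0}{\eta^n}\doteq 0$, and the implication \eqref{29}$\Rightarrow$\eqref{30} closes; the converse follows by symmetry, as you intended. So keep your three elementary facts, but drop the case split and the relocation idea: the entire proof consists in constructing one sufficiently large infinitesimal threshold.
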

\begin{proof}
  Suppose that the property (\ref{29}) holds.
  Choose $x\in C$ which satisfies $d(x,a)>|\varepsilon_a|$ and $x\doteq_C a$.
  Put $\varepsilon_b$ as:
  \[
  \varepsilon_b\ = \ \max{\left\{|\varepsilon_a|+d(a,b),\,3\cdot d(a,b)\right\}}\ \doteq \ 0.
  \]
  Then for all $x\in C$ which satisfies $0\doteq d(x,b)>\varepsilon_b$, the following inequations follow by the triangle inequality:
  \[
  d(x,a)\ \geq\ d(x,b)-d(a,b)\ > \ |\varepsilon_a|,
  \]
  and
  \[
  d(x,a) \ \geq\ d(x,b)-d(a,b)\ >\ 3\cdot d(a,b)-d(a,b)\ =\ 2\cdot d(a,b).
  \]
  The last inequality implies that
  \[
  d(x,b)\ \geq\ d(x,a)-d(a,b)\ >\ \frac{1}{2}\cdot d(x,a),
  \]
  and, therefore, the indiscernibility equivalence below follows:
  \[
  \frac{|G(x)|}{d(x,b)^n}\ \doteq\ 2^n\cdot\frac{|F(x)|}{d(x,a)^n}\ \doteq 0.
  \]
  The converse is true by the similar argument.
\end{proof}

\begin{prop}[Proposition 7.5.3 of Tsujishita \cite{tjst}]\label{753}
  Let $\varepsilon,\eta\in\left\{\frac{1}{\gamma}\, ;\, \gamma\in N\setminus\FN\right\}$, $\alpha:[0,1]^n_{\varepsilon}\rightarrow[0,1]^n_\eta$, where $[0,1]_\varepsilon \equiv \left\{x\varepsilon\, ;\, x\in\left[ 0,\frac{1}{\varepsilon}\right]\right\}$, represents an equivalence, and $F$ be a continuous rational-valued function on $[0,1]^n_{\eta}$.
  Then for $n\in\FN$ and $a\in[0,1]^n_\varepsilon$,
  \begin{equation}\label{34}
    \left(
    \left(
    \left(\exists \delta_\eta\in\mon{}{0}\right)
    \left(d(y,\alpha(a))>|\delta_\eta|\right)\wedge \left(y\doteq \alpha(a)\right)
    \right)\Rightarrow
    \left(\frac{|F(y)|}{d(y,\alpha(a))^n}\doteq 0\right)\right)
  \end{equation}
  if and only if
  \begin{equation}\label{35}
  \left(
  \left(
  \left(\exists \delta_\varepsilon\in\mon{}{0}\right)
  \left(d(x,a)>|\delta_\varepsilon|\right)\wedge \left(x\doteq a\right)
  \right)\Rightarrow
  \left(\frac{|F(\alpha(x))|}{d(x,a)^n}\doteq 0\right)\right).
  \end{equation}
\end{prop}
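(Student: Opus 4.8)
The plan is to reduce the statement to Proposition \ref{751} by transporting the base point and the test points through $\alpha$, and to mimic the distance comparison carried out there. By symmetry it suffices to prove that (\ref{34}) implies (\ref{35}); the reverse implication follows verbatim after replacing $\alpha$ by an almost inverse $\beta:[0,1]^n_\eta\to[0,1]^n_\varepsilon$, which again represents an equivalence. First I would fix $a$ and, given a test point $x\in[0,1]^n_\varepsilon$ with $x\doteq a$, set $y\equiv\alpha(x)$. Since $\alpha$ represents an equivalence it is in particular continuous, so $x\doteq a$ forces $y\doteq\alpha(a)$, while injectivity and surjectivity guarantee that as $x$ ranges over the monad of $a$ its image $y$ ranges, bijectively up to $\doteq$, over the monad of $\alpha(a)$. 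Thus the hypothesis of (\ref{34}) at $y=\alpha(x)$ becomes available whenever that of (\ref{35}) at $x$ holds, once the two distance conditions are matched.

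The crux is to compare the denominators $d(x,a)^n$ and $d(\alpha(x),\alpha(a))^n$. I would show that the threshold $\delta_\varepsilon$ can be chosen so small, yet still infinitesimal, that on the region $\{x\,;\,d(x,a)>|\delta_\varepsilon|,\ x\doteq a\}$ the quotient
\[
\lambda(x)\ \equiv\ \frac{d(\alpha(x),\alpha(a))}{d(x,a)}
\]
is finite and non-infinitesimal, i.e.\ $\tfrac1M\leq\lambda(x)\leq M$ for some finite $M\in\FN$. This is the exact analogue of the factor-$2$ estimate extracted from the triangle inequality in the proof of Proposition \ref{751}: there the perturbation was the infinitesimal shift $d(a,b)$, whereas here it is the distortion produced by $\alpha$. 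Concretely I would bound $d(\alpha(x),\alpha(a))$ above and below by $d(x,a)$ using that $\alpha$, as a representation of a grid equivalence, displaces points only within the target mesh $\eta$, so that $d(\alpha(x),\alpha(a))$ and $d(x,a)$ differ by at most a fixed multiple of $\eta$; choosing $\delta_\varepsilon$ with $\eta/|\delta_\varepsilon|\doteq 0$ then absorbs that error and yields $\lambda(x)\doteq 1$, exactly as $\varepsilon_b$ was built from $|\varepsilon_a|$ and $d(a,b)$ before.

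Granting this comparison, the conclusion is immediate. Writing
\[
\frac{|F(\alpha(x))|}{d(x,a)^n}\ =\ \lambda(x)^n\cdot\frac{|F(\alpha(x))|}{d(\alpha(x),\alpha(a))^n}\ =\ \lambda(x)^n\cdot\frac{|F(y)|}{d(y,\alpha(a))^n},
\]
the factor $\lambda(x)^n$ is a finite non-infinitesimal rational raised to the finite power $n\in\FN$, hence itself finite and non-infinitesimal, while the second factor is $\doteq 0$ by (\ref{34}) applied at $y=\alpha(x)$ (its hypotheses hold with $\delta_\eta=\tfrac1M\delta_\varepsilon$, since $d(y,\alpha(a))\geq\tfrac1M d(x,a)>\tfrac1M|\delta_\varepsilon|$). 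As a finite non-infinitesimal multiple of an infinitesimal is infinitesimal, the left-hand side is $\doteq0$, which is (\ref{35}). Matching the existential thresholds is the final bookkeeping: given $\delta_\eta$ one recovers $\delta_\varepsilon$ through the lower bound on $\lambda$, and conversely through $\beta$, precisely as $\varepsilon_b$ was produced from $\varepsilon_a$ in Proposition \ref{751}.

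The step I expect to be the main obstacle is exactly the distance comparison of the second paragraph: controlling the distortion $\lambda(x)$ of infinitesimal distances under $\alpha$. Unlike an infinitesimal translation, an arbitrary equivalence could in principle compress or dilate infinitesimal scales, so the argument must genuinely use that the equivalences between the grids $[0,1]^n_\varepsilon$ and $[0,1]^n_\eta$ relevant to the derivative theory act as mesh-changing quasi-identities (cf.\ Proposition \ref{242}), for which $\alpha(x)\doteq x$ and $\lambda(x)\doteq 1$ as soon as $d(x,a)$ dominates the mesh. Pinning down this domination---selecting $\delta_\varepsilon$ and $\delta_\eta$ so that the mesh error is negligible against $d(x,a)$ while keeping both thresholds infinitesimal---is where the real work lies.
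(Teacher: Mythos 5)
Your proposal follows essentially the same route as the paper's proof: transport the test point through $\alpha$ (so $y=\alpha(x)\doteq\alpha(a)$ by continuity), compare the denominators $d(x,a)^n$ and $d(\alpha(x),\alpha(a))^n$ via the triangle inequality using a uniform infinitesimal bound on the displacement $d(x,\alpha(x))$, transfer the infinitesimal quotient with a finite non-infinitesimal factor (the paper's explicit constant is $\left(\tfrac{3}{2}\right)^n$ where you use $\lambda(x)^n$), and dispose of the converse by the same argument with an almost inverse $\beta$. The step you single out as the main obstacle is precisely the one the paper asserts without justification --- it simply writes that since $\alpha$ represents an equivalence there exists $\delta\in\mon{}{0}$ with $d(x,\alpha(x))<|\delta|$ for all $x$, i.e.\ it tacitly treats $\alpha$ as a mesh-changing quasi-identity in the spirit of Proposition \ref{242}, exactly the reading you propose --- so your assessment of where the real content lies is accurate, and your proof is the paper's proof.
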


\begin{proof}
  Suppose the property (\ref{34}) holds.
  Let $\beta$ be an almost inverse of $\alpha$.
  Since $\alpha$ represents an equivalence there exists $\delta\in\mon{}{0}$ which satisfies
  \[
  d(x,\alpha(x))<|\delta|
  \]
  for all $x\in[0,1]_\varepsilon$.
  By the triangle inequality, the following inequalities are fulfilled for all $a,x\in[0,1]_\varepsilon$
  \[
  d(\alpha(x),\alpha(a))
  \ \leq\ d(x,\alpha(x))+d(x,a)+d(a,\alpha(a))
  \ <\ d(x,a)+2|\delta|.
  \]
  Let us choose $x\in [0,1]_\varepsilon$ which satisfies $d(x,a)>4|\delta|$ and $x\doteq a$.
  Then, the following inequality is drawn from the last one.
  \[
  d(\alpha(x),\alpha(a))\ <\ \frac{3}{2} d(x,a).
  \]
  It implies that the following indiscernible equality.
  \[
  \frac{|F(\alpha(x))|}{d(x,a)^n}
  \ \leq\
  \left(\frac{3}{2}\right)^n \cdot  \frac{|F(\alpha(x))|}{d(\alpha(x),\alpha(a))^n}
  \ \doteq\ 0.
  \]

  The converse case can also be shown by almost the same argument.
  Suppose the property (\ref{35}) holds.
  Put $b=\alpha(a)$.
  Since $a\doteq \beta(b)$, the next property is met
  \[
  \left(
  \left(
  \left(\exists \delta_b\in\mon{}{0}\right)
  \left(d(x,\beta(b))>|\delta_b|\right)\wedge \left(x\doteq \beta(b)\right)
  \right)\Rightarrow
  \left(\frac{|F(\alpha(x))|}{d(x,\beta(b))^n}\doteq 0\right)\right).
  \]
  The following inequality is also satisfied since $\beta$ is an almost inverse of $\alpha$
  \[
  d(y,\beta(y))\ <\ |\gamma|
  \]
  for all $y\in[0,1]_{\eta}$.
  By the same argument as the previous case, the inequality below follows:
  \[
  d(\beta(y),\beta(b))
  \ \leq\ d(y,\beta(y))+d(x,b)+d(b,\beta(b))
  \ <\ d(y,b)+2|\gamma|.
  \]
  Let us choose $y\in [0,1]_\eta$ which satisfies $d(y,b)>4|\gamma|$ and $y\doteq b$.
  Then, the following inequality is drawn from the last one.
  \[
  d(\beta(y),\beta(b))\ <\ \frac{3}{2} d(y,b)\ =\ \frac{3}{2} d(y,\alpha(a)) .
  \]
  It implies that the following inequality:
  \[
  \frac{|F(y)|}{d(y,\alpha(a))^n}
  \ \doteq\
  \frac{|F(\alpha(\beta(y)))|}{d(y,\alpha(a))^n}
  \ \leq\
  \left(\frac{3}{2}\right)^n \cdot  \frac{|F(\alpha(\beta(y)))|}{d(\beta(y),\beta(b))^n}
  \ \doteq\ 0.
  \]
\end{proof}

\section{Differentiation}

Let $\varepsilon>0$ be a nonfinite rational number $\varepsilon=\frac{1}{\tau}$ for $\tau\in N\setminus \FN$, and $([0,1]_{\varepsilon},\doteq)$ be a continuum representing a real interval $[0,1]$, in which $[0,1]_\varepsilon\equiv\left\{\frac{n}{\tau}\, ;\, (0\leq n\leq \tau)\wedge(n\in N)\right\}$.

Let $(f,[0,1]_{\varepsilon})$ denote a pair of a continuous rational-valued function $f$ on $[0,1]_\varepsilon$ and a $[0,1]_\varepsilon$-valued function $\kappa_\varepsilon:[0,1]\rightarrow [0,1]_\varepsilon$ given in Proposition \ref{242} as
\[
\kappa_\varepsilon (s)\ =\ \left[\frac{s}{\varepsilon}\right]\varepsilon.
\]
It is said that a real function $\mathscr{F}$ is \textit{represented by} $(f,[0,1]_{\varepsilon})$, or simply \textit{by} $f$\footnote{The existence of such $f$ is guaranteed by Proposition \ref{242} and \ref{511}.} iff $f\circ \kappa_\varepsilon$ represents $\mathscr{F}$.

Let us also denote $[0,1]_\varepsilon\setminus \{1\}$ as $[0,1]_\varepsilon^-$, and $x^+\equiv x+\Delta x$ in which $\Delta x\equiv\varepsilon$.
Then a \textit{difference quotient} of a rational-valued function $f$ on $[0,1]_\varepsilon$ at $x\in[0,1]_\varepsilon$ is given as
\[
\frac{\Delta f}{\Delta x}(x)\ \equiv\ \frac{f(x^+)-f(x)}{\Delta x}.
\]
$\Delta f(x)\ \equiv\ f(x^+)-f(x)$ is a \textit{difference} of $f$ at $x$, and $\frac{\Delta f}{\Delta x}$ is a \textit{difference quotient function} of $f$.

At a first glance, it may seem that the difference quotient function depends on the choice of $\varepsilon$, but it is not.
It is verified by the next proposition.

\begin{prop}[Proposition 8.2.1 of Tsujishita\cite{tjst}]\label{821}
  Let $\mathscr{F}$ be a real function on $[0,1]$.
  Let $\left(f_i,[0,1]_{\varepsilon_i}\right)$ $(i=1,2)$ be representations of $\mathscr{F}$ such that the difference quotients of $f_i$ are continuous.
  Then the real functions on $[0,1]$ represented by the difference quotients $\left(\frac{\Delta f_i}{\Delta x}, [0,1]_{\varepsilon_i}^-\right)$ $(i=1,2)$ coincide.
\end{prop}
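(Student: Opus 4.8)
The plan is to avoid comparing the two difference quotients directly—an approach that is bound to fail, since the mere indiscernibility $f_1\circ\kappa_{\varepsilon_1}\doteq f_2\circ\kappa_{\varepsilon_2}$ of the representations is destroyed upon division by the infinitesimals $\varepsilon_1,\varepsilon_2$—and instead to characterise the derivative through a representation-independent smallness condition of the type \eqref{29}. Fix $a\in[0,1]$ and write $a_i=\kappa_{\varepsilon_i}(a)$, so that $a_1\doteq a\doteq a_2$ by Proposition \ref{242}. Put $c_i=\frac{\Delta f_i}{\Delta x}(a_i)$; since $\frac{\Delta f_i}{\Delta x}$ represents a real function, each $c_i$ is a bounded (finite) rational. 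Because morphisms coincide exactly when their representatives are indiscernible and $a$ is arbitrary, it suffices to prove $c_1\doteq c_2$. Let us say that a rational $c$ is a \emph{slope of $\mathscr{F}$ at $a$ relative to $(f,\varepsilon)$} if the remainder $\rho(x)=f(x)-f(\kappa_\varepsilon(a))-c\bigl(x-\kappa_\varepsilon(a)\bigr)$ satisfies \eqref{29} with $n=1$ at the point $\kappa_\varepsilon(a)$.

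First I would establish that $c_i$ is a slope relative to its own representation $(f_i,\varepsilon_i)$. Writing $x=a_i+m\varepsilon_i$ and telescoping, $\rho_i(x)=\sum_{j=0}^{m-1}\bigl(\tfrac{\Delta f_i}{\Delta x}(a_i+j\varepsilon_i)-c_i\bigr)\varepsilon_i$, whence $\frac{|\rho_i(x)|}{d(x,a_i)}\le\max_{0\le j<m}\bigl|\tfrac{\Delta f_i}{\Delta x}(a_i+j\varepsilon_i)-c_i\bigr|$. For $x\doteq a_i$ every grid point $a_i+j\varepsilon_i$ lies in $\mon{}{a_i}$, so continuity of the difference quotient forces each summand—and hence the maximum over the set $\{0,\dots,m-1\}$—to be smaller than any prescribed $\tfrac1k$; thus the ratio is $\doteq 0$ and \eqref{29} holds.

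Next I would show that the notion of slope is independent of the representation, which is where Propositions \ref{751} and \ref{753} do the real work. Let $\alpha:[0,1]_{\varepsilon_1}\to[0,1]_{\varepsilon_2}$ be the equivalence between the two grids furnished by Propositions \ref{242} and \ref{511}, and suppose $c$ is a slope relative to $(f_2,\varepsilon_2)$. I would first move the base point from $a_2$ to $\alpha(a_1)\doteq a_2$ using Proposition \ref{751}, and then transport the property across the mesh change by Proposition \ref{753} (with $n=1$, applied to the continuous function $\rho_2$) to conclude that $\rho_2\circ\alpha$ satisfies \eqref{29} at $a_1$. The key algebraic observation is that, because $f_2\circ\alpha\doteq f_1$ and $f_2(a_2)\doteq f_1(a_1)$ while $\alpha(x)-a_2\doteq x-a_1$ and $c$ is finite, the two remainders built from the \emph{same} slope $c$ are indiscernible, $\rho_2\circ\alpha\doteq\rho_1$. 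A final application of Proposition \ref{751} on the common grid $[0,1]_{\varepsilon_1}$ then transfers \eqref{29} from $\rho_2\circ\alpha$ to $\rho_1$, so $c$ is a slope relative to $(f_1,\varepsilon_1)$ as well; the converse is symmetric.

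Finally I would record that the slope is unique up to $\doteq$: if $c$ and $c'$ are both slopes at $a$ relative to the same representation, subtracting the two instances of \eqref{29} gives $(c-c')\bigl(x-\kappa_\varepsilon(a)\bigr)=o\bigl(d(x,\kappa_\varepsilon(a))\bigr)$, and here—unlike in the naive comparison—division by $d(x,\kappa_\varepsilon(a))$ is legitimate because both remainders are genuinely of higher order, yielding $|c-c'|\doteq 0$. Combining the three facts: $c_1$ is a slope relative to $(f_1,\varepsilon_1)$, hence relative to $(f_2,\varepsilon_2)$; $c_2$ is a slope relative to $(f_2,\varepsilon_2)$; so $c_1\doteq c_2$. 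I expect the main obstacle to be the orchestration in the third paragraph—transporting differentiability across both a change of representing function and a change of mesh while keeping the slope $c$ fixed, since this is precisely the regime in which indiscernibility survives division by an infinitesimal only through Propositions \ref{751} and \ref{753}. Verifying the indiscernibility $\rho_2\circ\alpha\doteq\rho_1$ that licenses the final use of Proposition \ref{751}, together with the finiteness of the $c_i$, are the points demanding care.
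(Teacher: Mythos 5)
Your proposal is correct and follows essentially the same route as the paper: your telescoping argument in the first step is precisely the paper's proof of Theorem \ref{831}, and your transfer of the higher-order-remainder ("slope") property between representations uses Propositions \ref{751} and \ref{753} exactly as the paper's proof does. The only difference is packaging --- the paper pulls both representations back to the common domain $[0,1]_Q$ via $\alpha_i=\kappa_{\varepsilon_i}$, applies Theorem \ref{831} and Proposition \ref{753} to each, and then uses Proposition \ref{751} together with $f_1\circ\alpha_1\doteq f_2\circ\alpha_2$ to subtract the two secant-slope properties directly, whereas you work grid-to-grid and phrase that final subtraction as uniqueness of the slope up to $\doteq$; the mathematical content is identical.
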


To prove the proposition, let us first confirm that difference quotients coincide with derivatives.
\begin{thm}[Theorem 8.3.1 of Tsujishita \cite{tjst}]\label{831}
  If $f$ is a function on $[0,1]_\varepsilon$ with continuous difference quotients, then for $a\in[0,1]_\varepsilon^-$,
  \[
  \left(
  \left(
  \left(\exists\eta\in\mon{}{0}\right)
  \left(x-a>|\eta|\right)\wedge
  \left(x\doteq a\right)
  \right)\Rightarrow
  \left(\frac{f(x)-f(a)}{x-a}-\frac{\Delta f}{\Delta x}(a)\doteq 0\right)
  \right).
  \]
\end{thm}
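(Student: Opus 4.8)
The plan is to exhibit the Newton quotient $\frac{f(x)-f(a)}{x-a}$ as an \emph{average} of the difference quotients $\frac{\Delta f}{\Delta x}$ over the grid points lying between $a$ and $x$, and then to show that this average is indiscernible from $\frac{\Delta f}{\Delta x}(a)$ alone. Since $a,x\in[0,1]_\varepsilon$ with $a\doteq x$ and $x-a>|\eta|\geq 0$, I would first record that $x>a$ and write $a=m\varepsilon$, $x=(m+p)\varepsilon$ for natural numbers $m,p$ with $p\geq 1$, so that $x-a=p\varepsilon$; note $p\varepsilon\doteq 0$ because $x\doteq a$. Every intermediate point $a+j\varepsilon$ with $0\leq j<p$ then satisfies $a\leq a+j\varepsilon\leq x$, hence $|(a+j\varepsilon)-a|\leq|x-a|\doteq 0$ and so $a+j\varepsilon\doteq a$; moreover $a+j\varepsilon\leq x-\varepsilon\leq 1-\varepsilon$, so each such point lies in $[0,1]_\varepsilon^-$ and its difference quotient is defined.

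The key computation is a telescoping sum. Writing $\Delta f(a+j\varepsilon)=\frac{\Delta f}{\Delta x}(a+j\varepsilon)\,\varepsilon$, I obtain
\[
f(x)-f(a)\ =\ \sum_{j=0}^{p-1}\bigl(f(a+(j+1)\varepsilon)-f(a+j\varepsilon)\bigr)\ =\ \varepsilon\sum_{j=0}^{p-1}\frac{\Delta f}{\Delta x}(a+j\varepsilon),
\]
and dividing by $x-a=p\varepsilon$ turns the Newton quotient into the arithmetic mean
\[
\frac{f(x)-f(a)}{x-a}\ =\ \frac{1}{p}\sum_{j=0}^{p-1}\frac{\Delta f}{\Delta x}(a+j\varepsilon).
\]
Since $\frac{\Delta f}{\Delta x}(a)$ equals the mean of $p$ copies of itself, subtracting it reduces the theorem to showing
\[
\frac{1}{p}\sum_{j=0}^{p-1}\left(\frac{\Delta f}{\Delta x}(a+j\varepsilon)-\frac{\Delta f}{\Delta x}(a)\right)\ \doteq\ 0.
\]

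The main obstacle is that $p$ may be a nonfinite natural number, so this is an average of possibly infinitely many infinitesimals, and such averages are not infinitesimal in general. The way around it is to bound the mean in absolute value by its largest summand, $\max_{0\leq j<p}\bigl|\frac{\Delta f}{\Delta x}(a+j\varepsilon)-\frac{\Delta f}{\Delta x}(a)\bigr|$, which is a maximum of a set-definable family indexed by the set $\{0,\dots,p-1\}$ and is therefore attained at some concrete index $j_0$. Because $a+j_0\varepsilon\doteq a$ and the difference quotient function is assumed continuous, $\frac{\Delta f}{\Delta x}(a+j_0\varepsilon)\doteq\frac{\Delta f}{\Delta x}(a)$, so the maximal term is itself infinitesimal; hence the whole average is infinitesimal and the desired indiscernibility follows. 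The crux is thus the passage from ``each term is infinitesimal'' to ``the maximal term is infinitesimal,'' which is legitimate precisely because the maximum is realized at a \emph{single} grid point, so that only continuity at one argument is invoked rather than a uniform bound extracted from nonfinitely many points at once.
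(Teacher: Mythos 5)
Your proposal is correct and takes essentially the same approach as the paper: both proofs rewrite $f(x)-f(a)$ as a telescoping sum of difference quotients over the grid points between $a$ and $x$, subtract off $\frac{\Delta f}{\Delta x}(a)(x-a)$, and use continuity of $\frac{\Delta f}{\Delta x}$ at points indiscernible from $a$ to conclude the error is infinitesimal. The only difference is bookkeeping in the last step: the paper fixes an arbitrary finite rational $q$ and bounds every summand by $|q|\varepsilon$ simultaneously, whereas you bound the average by its maximal term (attained, since the index family is a set); both devices resolve the same uniformity issue and are equally valid.
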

\begin{proof}
  Pick $x\in[0,1]_\varepsilon$ which satisfies $x\doteq a$ and $a<x$, then
  \begin{eqnarray*}
    f(x)-f(a) & = & \sum_{a\leq u<x}\frac{\Delta f}{\Delta x}(u)(u^+-u)\\
    & = & \sum_{a\leq u<x}\frac{\Delta f}{\Delta x}(a)(u^+-u) +  \sum_{a\leq u<x}
    \left(\frac{\Delta f}{\Delta x}(u) - \frac{\Delta f}{\Delta x}(a)\right)(u^+-u)\\
    & = & \frac{\Delta f}{\Delta x}(a)(x-a) +  \sum_{a\leq u<x}
    \left(\frac{\Delta f}{\Delta x}(u) - \frac{\Delta f}{\Delta x}(a)\right)(u^+-u).
  \end{eqnarray*}
  Then, the following equation is met:
  \[
  f(x)-f(a)-\frac{\Delta f}{\Delta x}(a)(x-a)
  \ =\
  \sum_{a\leq u<x}\left(\frac{\Delta f}{\Delta x}(u) - \frac{\Delta f}{\Delta x}(a)\right)(u^+-u).
  \]
  Since $\frac{\Delta f}{\Delta x}$ is continuous and $a\doteq x$, $\left|\frac{\Delta f}{\Delta x}(u) - \frac{\Delta f}{\Delta x}(a)\right|<|q|$ for all $q\in\FQ$.
  Thus, for any $q\in\FQ$, the next inequations are met.
  \[
  \left|
  \sum_{a\leq u<x}\left(\frac{\Delta f}{\Delta x}(u) - \frac{\Delta f}{\Delta x}(a)\right)(u^+-u)
  \right|
  \ <\ \frac{x-a}{\varepsilon}\cdot \left|q\right|\cdot \varepsilon\ =\ (x-a)\cdot \left|q\right|.
  \]
  It implies $\left|f(x)-f(a)-\frac{\Delta f}{\Delta x}(a)(x-a)\right|<(x-a)\cdot |q|$ for all $q\in\FQ$, and thus,
  \[
  \frac{f(x)-f(a)}{x-a}-\frac{\Delta f}{\Delta x}(a)\ \doteq\ 0.
  \]
\end{proof}

\begin{proof}[A proof of Proposition \ref{821}.]
  Put $\alpha_i(x)=\left[\frac{x}{\varepsilon_i}\right]\cdot\varepsilon_i$ for $x\in[0,1]_Q\equiv\left\{ z\in Q\, ;\, 0\leq z\leq 1\right\}$ and $i\in\{1,2\}$.
  Then, for any $a\in[0,1]_Q$ the next property holds by Theorem \ref{831}:
  \[
  \left(
  \begin{matrix}
  \left(\exists\eta\in\mon{}{0}\right)
  \left(y-\alpha_i(a)>|\eta|\right)
  \wedge
  \left(y\doteq\alpha_i(a)\right)&&\\
  &&\hspace{-4.32cm}\Rightarrow
  \left(\frac{f_i(y)-f_i(\alpha_i(a))}{y-\alpha_i(a)}-\frac{\Delta f_i}{\Delta x}(\alpha_i(a))\doteq 0\right)
  \end{matrix}
  \right).
  \]
  By Proposition \ref{753}, the following is met:
  \[
  \left(
  \begin{matrix}
  \left(\exists\delta\in\mon{}{0}\right)
  \left(x-a>|\delta|\right)
  \wedge
  \left( x\doteq a\right)&&\\
  &&\hspace{-4.32cm}\Rightarrow
  \left(\frac{f_i(\alpha_i(x))-f_i(\alpha_i(a))}{x-a}-\frac{\Delta f_i}{\Delta x}(\alpha_i(a))\doteq 0\right)
  \end{matrix}
  \right).
  \]
  Since $f_1\circ\alpha_1\doteq f_2\circ\alpha_2$, Proposition \ref{751} implies
  \[
  \left(
  \begin{matrix}
  \left(\exists\delta\in\mon{}{0}\right)
  \left(x-a>|\delta|\right)
  \wedge
  \left( x\doteq a\right)&&\\
  &&\hspace{-4.32cm}\Rightarrow
  \left(\frac{\Delta f_1}{\Delta x}(\alpha_1(a)) - \frac{\Delta f_2}{\Delta x}(\alpha_2(a))
    \doteq 0\right)
  \end{matrix}
  \right).
  \]
  Hence, $\frac{\Delta f_1}{\Delta x}(\alpha_1(a)) \doteq \frac{\Delta f_2}{\Delta x}(\alpha_2(a))$ is fulfilled.
\end{proof}

Now, a real function $\mathscr{F}$ on a real interval $[0,1]$ is said to be \textit{differentiable} if it is represented by $\left(f,[0,1]_\varepsilon\right)$ and its difference quotient function $\frac{\Delta f}{\Delta x}$ is continuous.
The real function represented by $\left(\frac{\Delta f}{\Delta x}, [0,1]_{\varepsilon}\right)$ is a \textit{derivative} of $\mathscr{F}$ denoted by $\mathscr{F}'$.

It may seem unclear whether it is compatible with the derivative derived by an \textit{ordinary manner}.
As a matter of fact, it is.
Specifically, its representation is indiscernible with the difference quotient function $\frac{\Delta f}{\Delta x}$.
To confirm that, let us first introduce notions of real sequences and their convergence.

Let $\left(\mon{\mathit{}}{a_n}\right)_{n\in \tau}$ and $(a_n)_{n\in \tau}$ denote sequences of real numbers and their positions, which consist of $\tau\in N\setminus\FN$ bounded rational numbers, respectively.
A real sequence $(\mon{}{a_i})_{i\in\FN}$  is said to \textit{converge\footnote{
  See Sakahara and Sato \cite{topology} for the statement under continua in general.} to} $\mon{}{x}$, or equivalently that of rationals $(a_i)_{i\in\FN}$ to $x$, iff
\[
\left(
\forall q\in\FQ
\right)
\left(
\exists i\in\FN
\right)
\left(
\forall j>i
\right)
\left(
| a_j - x|< |q|
\right).
\]
It is simply denoted as $\lim_{i\in\FN} \mon{}{a_i}\ = \ \mon{}{x}$, or equivalently $\lim_{i\in\FN} a_i\doteq x$, iff $(\mon{}{a_i})_{i\in\FN}$ converges to $\mon{}{x}$.
Let us simply denote $\lim_{t\rightarrow 0} t\doteq 0$ iff there exists a rational sequence $(a_i)_{i\in\FN}$ which converges to 0, or $\lim_{i\in\FN} a_i\doteq 0$.

\begin{thm}\label{6}
  Let $\mathscr{F}$ be a differentiable real function and $f$ be its representation.
  Then, the following indiscerinibility is met
  \[
  \lim_{t\rightarrow 0}\frac{f(x+t)-f(x)}{t}\ \doteq\ \frac{\Delta f}{\Delta x}(x).
  \]
\end{thm}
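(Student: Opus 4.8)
The plan is to reduce the statement to its sequential meaning and then to bridge the infinitesimal regime governed by Theorem \ref{831} to the finite-index regime in which the limit is taken. Writing $f$ for the grid function coming from a representation $(f,[0,1]_\varepsilon)$ of $\mathscr{F}$, so that $f\circ\kappa_\varepsilon$ represents $\mathscr{F}$ and $f(x+t)$ abbreviates $f(\kappa_\varepsilon(x+t))$, I fix a grid point $x\in[0,1]_\varepsilon^-$ and abbreviate $D=\frac{\Delta f}{\Delta x}(x)$. Unwinding the definition of $\lim_{t\to 0}$, it suffices to fix an arbitrary rational sequence $(a_i)_{i\in\FN}$ with $a_i\neq 0$ and $\lim_{i\in\FN}a_i\doteq 0$ and to prove that the real sequence with positions $H(a_i)$, where $H(t)\equiv\frac{f(x+t)-f(x)}{t}$, converges to $\mon{}{D}$; that is, that for every $q\in\FQ$ with $q>0$ there is an $i\in\FN$ with $|H(a_j)-D|<q$ for all finite $j>i$.

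First I would treat the infinitesimal regime. For $t>0$ infinitesimal with $t/\varepsilon$ infinite, put $y=\kappa_\varepsilon(x+t)$; then $y$ is a grid point with $y\doteq x$ and $y-x=\left[\frac{t}{\varepsilon}\right]\varepsilon$, so Theorem \ref{831} applies at the base point $x$ and yields $\frac{f(y)-f(x)}{y-x}\doteq D$. Since $\left[\frac{t}{\varepsilon}\right]$ is infinite, $\frac{y-x}{t}\doteq 1$, and because $D$ is a bounded rational (the difference quotient function being continuous) the factorization $H(t)=\frac{f(y)-f(x)}{y-x}\cdot\frac{y-x}{t}$ gives $H(t)\doteq D$. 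The case $t<0$ is symmetric: one applies Theorem \ref{831} at the base point $y=\kappa_\varepsilon(x+t)<x$ and replaces $\frac{\Delta f}{\Delta x}(y)$ by $D$ using continuity of the difference quotient together with $y\doteq x$. Thus $H(t)\doteq D$ for every infinitesimal $t\neq 0$ with $|t|/\varepsilon$ infinite.

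The decisive step is to transfer this from infinitesimal increments to small finite ones. Fix $q\in\FQ$ with $q>0$ and a single infinitesimal $\rho\gg\varepsilon$ (for instance $\rho=\sqrt{\varepsilon}$). The class $S_q=\{t : 0<|t|\wedge|H(t)-D|<q\}$ is set-theoretically definable, since $f$, $\kappa_\varepsilon$, $x$ and $D$ are sets, and by the previous paragraph $S_q$ contains every $t$ with $\rho\leq|t|\doteq 0$. Since the monad $\mon{}{0}$ is not a set, overspill forces $S_q$ to contain $\{t : \rho\leq|t|\leq c\}$ for some finite $c>0$. Consequently $|H(t)-D|<q$ for every finite $t$ with $0<|t|\leq c$. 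Because $a_i\to 0$, there is an $i\in\FN$ with $|a_j|<c$ for all finite $j>i$, and for those $j$ we obtain $|H(a_j)-D|<q$. As $q$ was arbitrary, the sequence converges to $\mon{}{D}$, which is the assertion.

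The main obstacle is precisely this bridge. Theorem \ref{831} is inherently a statement about infinitesimal increments, whereas the limit in the theorem ranges over finite indices, so some transfer principle is unavoidable and overspill is the natural instrument. Two points demand care in executing it. One must stay in the range where $|t|/\varepsilon$ is infinite, because for $|t|<\varepsilon$ the rounded increment $\kappa_\varepsilon(x+t)-x$ vanishes and $H(t)$ collapses to $0$ rather than to $D$, so the infinitesimal estimate genuinely fails there; this is why I anchor the overspill at a fixed $\rho\gg\varepsilon$ rather than at arbitrarily small infinitesimals. And the product-of-indiscernibles step $H(t)=\frac{f(y)-f(x)}{y-x}\cdot\frac{y-x}{t}\doteq D$ must invoke the boundedness of $D$, since indiscernibility is compatible with multiplication only for bounded quantities.
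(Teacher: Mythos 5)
Your strategy---reduce the limit to sequential convergence, settle the infinitesimal regime with Theorem \ref{831}, and bridge to standard-size increments by overspill on a set-theoretically definable class---is close in spirit to the paper's proof, which also rests on Theorem \ref{831} combined with prolongation. But there is a genuine gap at your final transfer step. You begin with an \emph{arbitrary} rational sequence $(a_i)_{i\in\FN}$, $a_i\neq 0$, converging to $0$, yet your two estimates cover only the regimes $|t|/\varepsilon$ infinite (Theorem \ref{831} plus rounding) and $\rho\leq|t|\leq c$ (overspill); when you conclude ``for those $j$ we obtain $|H(a_j)-D|<q$'' you silently assume that every term $a_j$ with finite index is non-infinitesimal, hence above $\rho$. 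Nothing in your framing guarantees this: a rational sequence converging to $0$ may have terms of size comparable to $\varepsilon$ at finite indices, or indeed consist entirely of infinitesimals. You yourself observe that for $|t|<\varepsilon$ the rounded quotient collapses to $0$, but you do not notice that this observation defeats your own quantification: under your reading $f(x+t):=f(\kappa_\varepsilon(x+t))$, a sequence with $a_j=\varepsilon/2^{j+1}$ gives $H(a_j)=0$ for every $j$, and $a_j=\tfrac{3}{2}\varepsilon$ gives $H(a_j)=\tfrac{2}{3}\frac{\Delta f}{\Delta x}(x)\not\doteq\frac{\Delta f}{\Delta x}(x)$ in general. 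So the statement you set out to prove, quantified over all such sequences, is false, and no refinement of the overspill can close the hole; the class of admissible sequences must be restricted as part of the argument.

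That restriction is exactly what the paper builds in, and it is also what lets the paper avoid your secondary difficulty. The paper's sequence lives on the grid $[0,1]_\varepsilon$ (so $x+a_j$ is a grid point, no rounding occurs, and Theorem \ref{831} applies to \emph{every} nonzero infinitesimal increment, even those of size $\varepsilon$), is decreasing, and satisfies $a_n\not\doteq 0$ for $n\in\FN$ while $a_n\doteq 0$, $a_n\neq 0$ at infinite indices. The paper then argues by contradiction: if convergence failed for some $q\in\FQ$, prolongation along the \emph{index} produces an infinite $\alpha$ at which the failure persists, while Theorem \ref{831} applied to the increment $a_\alpha$ refutes it. Note also that your factorization $H(t)=\frac{f(y)-f(x)}{y-x}\cdot\frac{y-x}{t}$ requires $D=\frac{\Delta f}{\Delta x}(x)$ to be a \emph{bounded} rational, and your justification (continuity of the difference quotient) does not deliver this: in the paper's indiscernibility relation, continuity is compatible with unbounded values, since any two rationals exceeding every finite $k$ are indiscernible. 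The paper's on-grid argument never forms this product and so never needs boundedness. Both problems disappear if you adopt the paper's setting from the outset: grid-valued increments, non-infinitesimal at finite indices, infinitesimal and nonzero beyond them.
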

\begin{proof}
  Let $(a_n)_{n\in\tau}$, in which $\tau\in N\setminus\FN$, be a decreasing sequence on $[0,1]_\varepsilon$ such that $a_n\not\doteq 0$ for all $n\in\FN$ and $a_n\doteq 0$ but $a_n\ne 0$ otherwise.
  Choose $x\in[0,1]_\varepsilon$ arbitrarily.
  Suppose there exists $q\in\FQ$ which satisfies for all $i\in\FN$  that there always exists $j>i$ fulfilling the inequality
  \[
  \left|
  \frac{f(x+a_j)-f(x)}{a_j} - \frac{\Delta f}{\Delta x}(x)
  \right|
  \  >\ |q|.
  \]
  Then, by the axiom of prolongation, there exists $\alpha\in\tau\setminus\FN$ which satisfies
  \[
  \left|
  \frac{f(x+a_\alpha)-f(x)}{a_\alpha} - \frac{\Delta f}{\Delta x}(x)
  \right|
  \  >\ |q|.
  \]
  But by Theorem \ref{831}, if $a_\alpha>|\eta|>0$ is satisfied for some $\eta\in\mon{}{0}$, the following indiscernibility equivalence must be satisfied
  \[
  \frac{f(x+a_\alpha)-f(x)}{a_\alpha}\ =\
  \frac{f\upharpoonright [0,1]_\varepsilon(x+a_\alpha)-f\upharpoonright [0,1]_\varepsilon(x)}{a_\alpha}
  \ \doteq\ \frac{\Delta f}{\Delta x}(x).
  \]
  It is a contradiction.
\end{proof}

The basic properties of derivatives such as chain rule or inverse function theorem are also preserved (see 8.4 and 8.5 of Tsujishita \cite{tjst}, respectively).
Higher order derivative and its differentiability are drawn in a similar way (see 8.6 and 8.7 of Tsujishita \cite{tjst}).

The rational-valued function $\Sigma f\Delta x$ on $[0,1]_\varepsilon$, which is continuous and finite, is given as
\[
\left(\Sigma f\Delta x\right)(u)\ \equiv\ \sum_{0\leq x\leq u} f(x)\Delta x.
\]
An \textit{indefinite integral} of $\mathscr{F}$, denoted as $\int_0^t \mathscr{F}(x)dx$, is the real function represented by $\Sigma f\Delta x$.
Its continuity and finiteness are verified by Proposition 8.8.1 and its independence from the choice of the representation by Proposition 8.8.3 of Tsujishita \cite{tjst}.

Then the fundamental theorem of calculus is established.
\begin{prop}[Proposition 8.8.4 of Tsujishita \cite{tjst}]
  Suppose $\mathscr{F}$ is a real function on $[0,1]$.
  Then the real function $\int_0^t \mathscr{F}(x)dx$ on $[0,1]$ is differentiable and its derivative is $\mathscr{F}$.
\end{prop}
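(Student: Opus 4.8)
The plan is to compute the difference quotient of a representation of the indefinite integral directly, exploiting the telescoping of the defining sum, and then to read off both differentiability and the identity of the derivative straight from the definitions.

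First I would fix a representation $(f,[0,1]_\varepsilon)$ of $\mathscr{F}$; by the definition of the indefinite integral, $\int_0^t\mathscr{F}(x)\,dx$ is then the real function represented by $g\equiv\Sigma f\Delta x$, where $g(u)=\sum_{0\leq x\leq u}f(x)\Delta x$, and $g$ is continuous and finite. The central computation is the difference $\Delta g(x)=g(x^+)-g(x)$. Since the grid spacing of $[0,1]_\varepsilon$ is exactly $\Delta x=\varepsilon$, the two sums differ by a single grid term, so the sum telescopes and
\[
\Delta g(x)=f(x^+)\,\Delta x,\qquad\text{hence}\qquad \frac{\Delta g}{\Delta x}(x)=f(x^+).
\]

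Next I would show $\frac{\Delta g}{\Delta x}\doteq f$ and that this difference quotient is continuous. Because $\Delta x=\varepsilon\doteq 0$ we have $x^+\doteq x$ for every $x\in[0,1]_\varepsilon^-$, and the continuity of the representation $f$ gives $\frac{\Delta g}{\Delta x}(x)=f(x^+)\doteq f(x)$. Continuity of the difference quotient then follows by chaining indiscernibilities: if $x\doteq y$ then $\frac{\Delta g}{\Delta x}(x)\doteq f(x)\doteq f(y)\doteq\frac{\Delta g}{\Delta x}(y)$. This is precisely the hypothesis in the definition of differentiability, so $\int_0^t\mathscr{F}(x)\,dx$ is differentiable.

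Finally, the derivative is by definition the real function represented by $\bigl(\frac{\Delta g}{\Delta x},[0,1]_\varepsilon\bigr)$. Since $\frac{\Delta g}{\Delta x}\doteq f$ pointwise on $[0,1]_\varepsilon$, precomposition with $\kappa_\varepsilon$ preserves indiscernibility, so $\frac{\Delta g}{\Delta x}\circ\kappa_\varepsilon\doteq f\circ\kappa_\varepsilon$; as $f$ represents $\mathscr{F}$, the derivative equals $\mathscr{F}$. I expect the only genuine care to be required in the telescoping step — namely in the convention for the endpoints of the sum, which decides whether the surviving term is $f(x^+)$ or $f(x)$, though both are a single value of $f$ at a grid point indiscernible from $x$. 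After that the argument reduces to the single observation $x^+\doteq x$ together with the continuity of $f$, so no appeal to the heavier apparatus of Theorem \ref{831} or Proposition \ref{821} is needed.
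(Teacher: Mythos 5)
Your proof is correct. There is, however, nothing in the paper to compare it against: the paper states this proposition without proof, deferring entirely to Proposition 8.8.4 of Tsujishita, so your argument fills a genuine omission rather than paralleling or diverging from an in-paper argument. The computation is the natural one and it works: with the paper's convention $\left(\Sigma f\Delta x\right)(u)=\sum_{0\leq x\leq u}f(x)\Delta x$, the sums telescope to $\Delta\left(\Sigma f\Delta x\right)(x)=f(x^+)\Delta x$, so the difference quotient equals $f(x^+)$, which is indiscernible from $f(x)$ because $x^+\doteq x$ and the representation $f$ is continuous; continuity of the difference quotient then follows by transitivity of $\doteq$, which gives differentiability, and precomposition with $\kappa_\varepsilon$ preserves pointwise indiscernibility, which identifies the derivative with $\mathscr{F}$. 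One small caveat to your closing remark: the computation itself indeed needs neither Theorem \ref{831} nor Proposition \ref{821}, but Proposition \ref{821} is still what entitles you to speak of \emph{the} derivative of the indefinite integral --- it guarantees that the derivative you computed from the particular representation $\Sigma f\Delta x$ does not depend on that choice of representation. Since Proposition \ref{821} is already established in the paper, this is a matter of citation rather than a gap in your argument.
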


\bibliographystyle{plain}
\bibliography{ref}

\begin{thebibliography}{1}

\bibitem{cogjump}
Kiri {Sakahara} and Takashi {Sato}.
\newblock {An Alternative Set Model of Cognitive Jump}.
\newblock {\em arXiv e-prints}, arXiv:1904.00613, Apr 2019.

\bibitem{topology}
Kiri {Sakahara} and Takashi {Sato}.
\newblock {Basic Topological Concepts and a Construction of Real Numbers in Alternative Set Theory}.
\newblock {\em arXiv e-prints}, arXiv:2005.04388, May 2020.

\bibitem{tjst}
Toru {Tsujishita}.
\newblock {Alternative Mathematics without Actual Infinity}.
\newblock {\em arXiv e-prints}, arXiv:1204.2193v2, Jun 2012.

\bibitem{ast}
Petr Vop\v{e}nka.
\newblock {\em Mathematics in the Alternative Set Theory}.
\newblock Teubner Verlagagesellshaft, Leipzig, 1979.

\bibitem{encycro-ast}
Petr Vop\v{e}nka and Kate\v{r}ina Trlifajov\'{a}.
\newblock Alternative set theory.
\newblock In Christodoulos~A. Floudas and Panos~M. Pardalos, editors, {\em
  Encyclopedia of Optimization}, pages 73--77. Springer, 2009.

\end{thebibliography}

\end{document}